\documentclass[11pt,reqno]{amsart}
\usepackage[T1]{fontenc}
\usepackage{lmodern}
\usepackage[letterpaper,margin=1.1in]{geometry}
\usepackage{amsmath,amssymb,mathtools}
\usepackage[expansion=false]{microtype}
\usepackage[hidelinks]{hyperref}

\newtheorem{theorem}{Theorem}[section]
\newtheorem{proposition}[theorem]{Proposition}
\newtheorem{lemma}[theorem]{Lemma}
\newtheorem{corollary}[theorem]{Corollary}
\theoremstyle{definition}
\newtheorem{definition}[theorem]{Definition}
\theoremstyle{remark}
\newtheorem{remark}[theorem]{Remark}
\numberwithin{equation}{section}
\DeclareMathOperator{\SL}{SL}
\DeclareMathOperator{\GL}{GL}
\DeclareMathOperator{\Sym}{Sym}

\DeclareMathOperator{\id}{id}
\newcommand{\Z}{\mathbb{Z}}

\newcommand{\cB}{\mathcal{B}}

\title[Instability in finite actions]{Infranormal Kazhdan subgroups\protect\\
and instability in finite actions}
\author{Jonas Pinke}
\date{}
\subjclass[2020]{20F65, 20F69, 20E06, 22D55, 37A15}
\keywords{Sofic approximation, stability in finite actions, Kazhdan's property~\(\mathrm{(T)}\), ascending HNN extension}

\begin{document}

\begin{abstract}
We prove that the group
\(G=\Z[1/2]^3\rtimes(\SL_3(\Z)\times\Z)\)
is finitely presented and residually finite, but is not stable in
finite actions, answering a question of Gohla and Thom. The proof
uses the infranormal Kazhdan subgroup
\(H=\Z^3\rtimes\SL_3(\Z)\) and its normal closure \(N\) in \(G\).
A Kazhdan projection estimate shows that, in any \(G\)-action
weakly contained in finite actions, every \(H\)-invariant measurable
subset is also \(N\)-invariant.
We construct an explicit sofic approximation of \(G\) whose limit
action contains a measurable subset that is \(H\)-invariant but not
\(N\)-invariant.
\end{abstract}

\maketitle

\section{Introduction}

A residually finite group admits sofic approximations coming from
genuine finite actions. Gohla and Thom call a group \emph{stable in
finite actions} if every sofic limit action is weakly contained in
finite actions; see \cite[Definition~3.7]{GT}. Equivalently, the statistics of finite measurable partitions
in every such limit action must be approximable by those
of genuine finite actions, as in Kechris's notion of weak
containment \cite[Section~10]{Kechris}. They asked whether there exists
a residually finite group that is not stable in finite actions \cite[Question~3.12]{GT}.
We answer this question affirmatively with a finitely presented
example.

Consider the group
\(
 G=\Z[1/2]^3\rtimes\bigl(\SL_3(\Z)\times\Z\bigr),
\)
where \((B,k)\) acts on \(\Z[1/2]^3\) by \(v\mapsto 2^kBv\).
This semidirect product can be realized as an ascending HNN extension of
the Kazhdan group \(H=\Z^3\rtimes\SL_3(\Z)\), with stable letter \(t\)
satisfying \(t(u,B)t^{-1}=(2u,B)\).
The group \(G\) is a particular case of the ascending HNN
extensions considered by Eckhardt in his work on non-finite
full group \(C^*\)-algebras
\cite[Theorem~3.1 and Example~3.2]{Eck}.

Our main theorem is the following.

\begin{theorem}\label{thm:main}
The group \(G\) is finitely presented and residually finite, but is not
stable in finite actions.
\end{theorem}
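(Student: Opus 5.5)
The plan has three parts: finite presentability, residual finiteness, and instability in finite actions.

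\emph{Finite presentability.} We realize \(G\) as the ascending HNN extension \(H*_{\phi}\) of \(H=\Z^3\rtimes\SL_3(\Z)\), where \(\phi(u,B)=(2u,B)\). The map \(\phi\) is an injective endomorphism of \(H\) onto a finite-index subgroup. Since \(H\) is finitely presented (as \(\SL_3(\Z)\) is), the HNN extension is finitely presented: take the relators of \(H\) together with \(tht^{-1}=\phi(h)\) for \(h\) in a finite generating set. To identify this HNN extension with \(G\), we check that the map \(t\mapsto(0,\id,1)\) is an isomorphism. This is the standard normal-form fact that \(G=\bigcup_k t^{-k}Ht^{k}\rtimes\langle t\rangle\).

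\emph{Residual finiteness.} For odd \(m\), reduce \(\Z[1/2]^3\) modulo \(m\). This gives homomorphisms \(G\to(\Z/m)^3\rtimes(\SL_3(\Z/m)\times\Z/\mathrm{ord}_m(2))\), where the last factor acts by multiplication by \(2^k\). A nontrivial element \((v,B,k)\) survives some such quotient: if \(k\neq0\) or \(B\neq\id\), use the quotient \(\SL_3(\Z)\times\Z\) and residual finiteness of that group; otherwise \(v\neq0\) has some odd-part numerator that is nonzero mod some odd \(m\).

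\emph{Instability.} This is the heart of the proof. Let \(N\) be the normal closure of \(H\); one computes \(N=\Z[1/2]^3\rtimes\SL_3(\Z)\). The argument has two steps.

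The first step, the "rigidity" step, shows that in any probability measure preserving action \(G\curvearrowright(X,\mu)\) weakly contained in finite actions, every \(H\)-invariant measurable \(A\) is \(t\)-invariant up to null sets, hence \(N\)-invariant. For this, take a Kazhdan pair \((S,\epsilon)\) for \(H\). In a finite \(G\)-set \(F\), the projection onto \(H\)-invariant functions is controlled: if \(\mathbf 1_A\) is almost \(S\)-invariant, it is close to an \(H\)-invariant function. An \(H\)-invariant function on \(F\) is a union of \(H\)-orbits, and \(tHt^{-1}\) has finite index in \(H\). Since \(F\) is finite, \(t\) permutes the \(H\)-orbit partition compatibly with its coarsening, forcing \(t\) to map the \(H\)-invariant \(\sigma\)-algebra onto itself. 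The key counting point is that the \(\phi(H)\)-orbits refine the \(H\)-orbits, \(t\) bijects \(H\)-orbits to \(\phi(H)\)-orbits, and by finiteness the numbers of orbits match, so each \(H\)-orbit is a single \(\phi(H)\)-orbit. Hence \(\mu(tA\triangle A)\) is small whenever \(A\) is almost \(S\)-invariant. These are statistics of finite partitions, so they pass to weak containment with uniform constants, giving \(tA=A\) in the limit.

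The second step constructs a sofic approximation whose limit violates this. The idea is to take finite quotients where \(t\) acts non-bijectively on \(H\)-orbits. Concretely, let \(G\) act on finite sets approximating \(\Z[1/2]^3/\Z^3\)-type structures, for example \(X_n=(\Z/2^n)^3\times\{\text{layers}\}\). Here \(H\) acts affinely on each layer, and \(t\) shifts layers via multiplication by \(2\), which is non-injective mod \(2^n\). We then fix up bijectivity only on a small proportion of points, giving an asymptotic action rather than a true one. In the limit, the set \(A\) consisting of the points in "positive layers" (or the \(H\)-orbit of \(0\) in each layer), weighted appropriately, is \(H\)-invariant but moved by \(t\) with positive measure.

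The main obstacle is making this sofic approximation explicit and verifying that it is genuinely a sofic approximation of \(G\) rather than of a quotient. That requires asymptotic freeness, so we would take products with free finite actions such as the congruence quotients built above. We must also check that the defect set \(A\) retains positive measure after taking the product.
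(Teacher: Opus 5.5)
Your finite presentability and residual finiteness arguments are fine. The instability part has a genuine gap: you aim at $t$-invariance, which is the wrong invariance. The claim that, in an action weakly contained in finite actions, every $H$-invariant set is $t$-invariant fails already for finite actions. So does the claim that $\mu(tA\triangle A)$ is controlled by $\max_{s\in S}\mu(sA\triangle A)$. For a counterexample, let $G$ act on $\Z/2\Z$ by $g\cdot y=y+\chi(g)$, where $\chi(v,B,k)=k$. Then $H\le\ker\chi$ acts trivially, so $\{0\}$ is $H$-invariant, yet $t$ swaps the two points.

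Your orbit-counting argument is correct, but it shows that $t$ \emph{permutes} the $H$-orbits, i.e.\ maps the $H$-invariant algebra onto itself. It does not show that $t$ fixes $H$-invariant sets, so the step from there to ``$\mu(tA\triangle A)$ is small'' is a non sequitur. What does follow is this: for $H$-invariant $A$, every $t^{r}A$ is $H$-invariant, so $A$ is $t^{-r}Ht^{r}$-invariant for all $r\ge0$, hence $N$-invariant. This is what the paper extracts from Lemma~\ref{lem:finite-quotient}. Combined with the Kazhdan estimate, it gives $\mu(gA\triangle A)\le\frac4\kappa\max_{s}\mu(sA\triangle A)$ only for $g\in N$. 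So your witness must be an $H$-invariant set moved by an element of $N\setminus H$, such as $w=t^{-1}at$ (translation by $\tfrac12e_1$). An $H$-invariant set moved by $t$ contradicts nothing.

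The second step is also not yet a construction, and its ingredients fail as described. Multiplication by $2$ on $(\Z/2^n\Z)^3$ is $8$-to-$1$ at every point, so it cannot be repaired into a bijection by changing it on a small proportion of points. Suppose instead $t$ is a plain shift of layers and $(u,B)$ acts on layer $i$ by $v\mapsto\bar Bv+2^{n-i}\bar u$. Then the $H$-orbits on layer $i$ are the level sets of $\min(\mathrm{val}_2v,\,n-i)$, and the orbit of $0$ has proportion $2^{-3(n-i)}$. On layer $i<n$, $w$ adds $2^{n-i-1}e_1$, which changes the orbit only on a proportion of at most $2^{-3(n-i-1)}$. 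Hence every $H$-invariant set in such a model is moved by $w$ on a proportion at most $\frac{8}{7(n+1)}\to0$. Your candidate sets, the $H$-orbit of $0$ and the positive layers, therefore have vanishing measure or vanishing $w$-displacement.

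The paper's key device is to use dual coordinates $(\ell,x)\in R_n^{1\times3}\times R_n$, with $(u,B)$ acting by $(i,\ell,x)\mapsto(i,\ell\bar B^{-1},x+2^{n-i}\ell\bar B^{-1}\bar u)$ and $t$ an honest cyclic layer shift. The HNN relation then fails only on the last layer. The set $E_n=\{i<n,\ b_{n-i}(x)=1\}$ is exactly $H$-invariant, of measure about $\tfrac12$. Since $w$ adds $2^{n-i-1}\ell_1$ to $x$, it flips membership precisely when $\ell_1$ is odd, giving $\mu_n(W_nE_n\triangle E_n)\to\tfrac12$.
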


The proof combines an obstruction for finite actions with an explicit
sofic approximation. The subgroup \(H\) is infranormal in \(G\) in the
sense of Kun and Thom. By their observation in
\cite{KT}, \(H\) and its normal closure
\(N\) have the same image in every finite quotient of \(G\). Hence, in
every finite \(G\)-action, \(H\)-invariant subsets are also
\(N\)-invariant. A Kazhdan estimate controls the \(L^2\)-distance
of the indicator function of a measurable subset from the
\(H\)-invariant functions in terms of its displacements under a fixed
finite generating set of \(H\). Since these functions are also
\(N\)-invariant, the same estimate controls the displacement of the
subset under every element of \(N\). This uniform estimate passes to
metric ultraproducts. Consequently, every \(H\)-invariant measurable
subset in an action weakly contained in finite actions is also
\(N\)-invariant.

We first construct finite models with \(n+1\) equally sized
layers, indexed by \(i=0,\ldots,n\). On each layer, the coordinates are \(\ell\in(\Z/2^n\Z)^{1\times3}\) and
\(x\in\Z/2^n\Z\). There is a genuine \(H\)-action in which
translation by \(u\in\Z^3\) adds \(2^{n-i}\ell u\) to \(x\),
while the linear part \(B\in\SL_3(\Z)\) acts by
\(\ell\mapsto\ell B^{-1}\). All arithmetic is modulo \(2^n\).
The stable letter acts as a cyclic shift between the layers.
The HNN relations hold outside one layer, whose relative size
tends to zero. Taking the product of these models with suitable finite
quotient actions gives a sofic approximation of \(G\).

We also construct subsets \(E_n\) preserved by the \(H\)-actions.
Let \(a\in H\) be translation by the first standard basis vector
\(e_1\), and put \(w=t^{-1}at\), which represents translation by
\(e_1/2\). The permutation representing \(w\) changes membership in
\(E_n\) on a proportion of points tending to \(1/2\). Passing to
a limit action of this sofic approximation, we obtain an
\(H\)-invariant measurable subset \(E\) with
\[
 \mu(wE\triangle E)=\frac12.
\]
Since \(w\in N\), \(E\) is not \(N\)-invariant. The preceding
obstruction therefore shows that this limit action is not weakly
contained in finite actions. Consequently, \(G\) is not stable in finite actions.

\medskip
\noindent\textbf{AI Declaration.}
AI tools were used in preparing this manuscript. In particular, OpenAI's GPT Astra proposed the explicit
\(\SL_3(\Z)\)-equivariant realization of the layered
construction in Section~5 and the subsets \(E_n\)
used in the obstruction.
The layered construction extends the approximation
described in \cite[Remark~4.4]{ThomConditional};
see also Remark~4.3 there. The
author supplied this context and the connection with infranormal
subgroups, and takes responsibility for the mathematical claims,
proofs, and any errors or omissions.

\section{Measured Boolean algebras and stability}\label{sec:stability}

We recall the setup of measured Boolean algebras and the definitions of weak containment and stability in finite actions. All groups are discrete and countable.

\subsection{Measured Boolean algebras and weak containment}

We use the Boolean-algebra formulation of
\cite[Section~2.1]{GT}. A \emph{measured Boolean algebra} is a Boolean
algebra \(\cB\) equipped with a normalized finitely additive probability measure
\(\mu\).
If necessary, we pass to the quotient by null sets, so that
\[
d(B,D)=\mu(B\triangle D)    
\]
defines a metric on \(\cB\).

A measured \(G\)-Boolean algebra is a measured Boolean algebra
together with a \(G\)-action by measure-preserving Boolean
automorphisms. For finite measured \(G\)-Boolean algebras, we simply
speak of \emph{finite actions}. Given a subgroup \(L\leq G\), we write
\(\cB^L\) for the subalgebra corresponding to the \(L\)-invariant
measurable subsets:
\[
 \cB^L=\{B\in\cB:hB=B\text{ for all }h\in L\}.
\]

Given a family of measured Boolean algebras \((\cB_n,\mu_n)_{n\in I}\),
and a non-principal ultrafilter \(\omega\) on \(I\), their metric ultraproduct is
\[
 \prod_{n\to\omega}(\cB_n,\mu_n)
 =\left(\prod_{n\in I}\cB_n\right)\big/\sim,
 \]
 where
 \[
 (B_n)\sim(D_n)
 \ \Longleftrightarrow\
 \lim_{n\to\omega}\mu_n(B_n\triangle D_n)=0.
\]
The equivalence class of \( (B_n)\) is denoted by \([B_n]\). The operations are defined coordinatewise. The measure is given by
\[
\mu_{\omega}([B_n])=\lim_{n \to \omega} \mu_n(B_n).
\]
If each \((\cB_n,\mu_n)\) carries a measure-preserving \(G\)-action, then \(g[B_n]=[gB_n]\) for \(g\in G\) defines a measure-preserving \(G\)-action on the ultraproduct.

Following Gohla and Thom \cite[Definitions~2.1--2.2]{GT}, we make the following definition.

\begin{definition}\label{def:weak}
A measured \(G\)-Boolean algebra is \emph{weakly contained in
finite actions} if it admits a measure-preserving, \(G\)-equivariant
Boolean homomorphism into a metric ultraproduct of finite measured
\(G\)-Boolean algebras.
\end{definition}

Such a homomorphism preserves the metric and is therefore injective.

\subsection{Sofic approximations and their limits}

For a nonempty finite set \(V\), let \(\mu_V\) denote
normalized counting measure. The \emph{normalized Hamming
metric} on \(\Sym(V)\) is bi-invariant and is given by
\[
 d_{\mathrm H}(\tau,\tau')
 =
 \frac{|\{x\in V:\tau x\ne\tau'x\}|}{|V|}
 \qquad(\tau,\tau'\in\Sym(V)).
\]

We use the free-group formulation of sofic approximations;
see \cite[Definition~3.1]{GT}.

\begin{definition}\label{def:sofic}
Let \(q:F(S)\to G\) be the quotient map associated with a
finite presentation \(G=\langle S\mid R\rangle\).
A \emph{sofic approximation} of \(G\) is a sequence of
homomorphisms
\[
 \sigma_n:F(S)\longrightarrow\Sym(V_n),
 \qquad n\geq1,
\]
where each \(V_n\) is a nonempty finite set, satisfying the
following conditions as \(n\to\infty\):
\begin{enumerate}
\item[(i)] \emph{Asymptotic homomorphism.}
For every \(v\in\ker q\),
\[
 d_{\mathrm H}\bigl(\sigma_n(v),\id\bigr)
 \longrightarrow0.
\]

\item[(ii)] \emph{Asymptotic freeness.}
For every \(v\in F(S)\setminus\ker q\),
\begin{equation}\label{eq:asymptotic-free}
 d_{\mathrm H}\bigl(\sigma_n(v),\id\bigr)
 \longrightarrow1.
\end{equation}
\end{enumerate}
\end{definition}

If \(u,v\in F(S)\) represent the same element of \(G\),
then \(uv^{-1}\in\ker q\), and hence
\[
 d_{\mathrm H}\bigl(\sigma_n(u),\sigma_n(v)\bigr)
 =
 d_{\mathrm H}\bigl(\sigma_n(uv^{-1}),\id\bigr)
 \longrightarrow0.
\]
It follows that, for every nonprincipal ultrafilter \(\omega\) on \( \mathbb{N}\) and
every sequence of subsets \(B_n\subseteq V_n\), the limit
\[
 \lim_{n\to\omega}
 \mu_{V_n}\bigl(\sigma_n(v)B_n\triangle B_n\bigr)
\]
is independent of the word \(v\)
representing a given element of \(G\).

\begin{lemma}\label{lem:limit-action}
Every sofic approximation and every nonprincipal
ultrafilter \(\omega\) on \(\mathbb N\) induce a
measure-preserving action of \(G\) on
\[
 (\cB_\omega,\mu_\omega)
 =
 \prod_{n\to\omega}
 \bigl(\mathcal P(V_n),\mu_{V_n}\bigr).
\]
\end{lemma}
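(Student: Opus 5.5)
We define the action on representatives by choosing, for each \(g\in G\), a word \(v_g\in F(S)\) with \(q(v_g)=g\), and setting
\[
 g[B_n]=[\sigma_n(v_g)B_n].
\]
Four things must then be checked, in this order: the formula is independent of the chosen word, it respects the equivalence relation \(\sim\), each \(g\) acts by a measure-preserving Boolean automorphism, and the action axioms hold.

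\emph{Independence of the word.} If \(u,v\) both represent \(g\), then \(uv^{-1}\in\ker q\). The displayed consequence of Definition~\ref{def:sofic}(i) gives \(d_{\mathrm H}(\sigma_n(u),\sigma_n(v))\to0\). For any permutations \(\tau,\tau'\) and any set \(B\), the set \(\tau B\triangle\tau' B\) is contained in the image under \(\tau\) of the points where \(\tau\) and \(\tau'\) differ, together with the corresponding image under \(\tau'\). Hence
\[
 \mu_V(\tau B\triangle\tau'B)\le 2\,d_{\mathrm H}(\tau,\tau').
\]
Applied with \(\tau=\sigma_n(u)\), \(\tau'=\sigma_n(v)\), this shows \([\sigma_n(u)B_n]=[\sigma_n(v)B_n]\).

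\emph{Well-definedness on classes.} Each \(\sigma_n(v)\) is a bijection of \(V_n\), so it preserves normalized counting measure and commutes with all Boolean operations. Therefore
\[
 \mu_{V_n}(\sigma_n(v)B_n\triangle\sigma_n(v)D_n)=\mu_{V_n}(B_n\triangle D_n),
\]
and \((B_n)\sim(D_n)\) implies \((\sigma_n(v)B_n)\sim(\sigma_n(v)D_n)\). The same coordinatewise reasoning shows that each \(g\) induces a Boolean homomorphism of \(\cB_\omega\) preserving \(\mu_\omega\), because \(\mu_\omega\) is the \(\omega\)-limit of the \(\mu_{V_n}\).

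\emph{Action axioms.} The identity is represented by the empty word, which acts trivially. For \(g,h\in G\), the product word \(v_gv_h\) represents \(gh\), and \(\sigma_n\) is a genuine homomorphism on \(F(S)\). Combined with the independence of the word, this gives
\[
 (gh)[B_n]=[\sigma_n(v_g)\sigma_n(v_h)B_n]=g(h[B_n]).
\]
In particular \(g^{-1}\) acts as the inverse of \(g\), so each \(g\) acts by an automorphism.

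The only step with any real content is independence of the word, which rests on the Hamming estimate above. Note that asymptotic freeness (ii) is not needed for this lemma.
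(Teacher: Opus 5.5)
Your proof is correct and takes essentially the same route as the paper. The paper lets \(F(S)\) act coordinatewise on the ultraproduct and shows that \(\ker q\) acts trivially via \(\mu_{V_n}(\sigma_n(v)B_n\triangle B_n)\le d_{\mathrm H}(\sigma_n(v),\id)\); this is exactly your independence-of-the-word step, with constant \(1\) in place of your harmless \(2\). Your checks of well-definedness on classes and of measure preservation are the routine verifications that the paper leaves implicit.
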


\begin{proof}
The homomorphisms \(\sigma_n\) define measure-preserving
\(F(S)\)-actions on \(\mathcal P(V_n)\), and therefore
on their metric ultraproduct. For every \(v\in\ker q\)
and every element \([B_n]\), we have
\[
 \mu_\omega\bigl(v[B_n]\triangle[B_n]\bigr)=\lim_{n\to\omega}\mu_{V_n}\bigl(\sigma_n(v)B_n\triangle B_n\bigr)\leq\lim_{n\to\omega}d_{\mathrm H}\bigl(\sigma_n(v),\id\bigr)=0.
\]
Thus \(\ker q\) acts trivially, so the action factors
through \(G\).
\end{proof}

We recall Gohla and Thom's notion of stability in finite
actions \cite[Definition~3.7]{GT}.

\begin{definition}\label{def:stable}
A finitely presented group \(G\) is \emph{stable in finite
actions} if, for every sofic approximation of \(G\) and
every nonprincipal ultrafilter \(\omega\) on \(\mathbb N\),
the induced limit action is weakly contained in finite
actions.
\end{definition}

For measure algebras of probability-preserving actions,
the notion of weak containment used here corresponds
to that of Kechris \cite[Section~10]{Kechris};
see also the discussion following
\cite[Definition~2.2]{GT}.
An equivalent formulation of stability in finite actions,
using finite partitions of sufficiently accurate sofic
approximations and their approximation in genuine finite
actions, is given in
\cite[Lemma~3.8, \((i)\Leftrightarrow(ii)\)]{GT}.

\section{An infranormal finite-action obstruction}\label{sec:obstruction}

\subsection{Images in finite quotients}

The following definition is due to Kun and Thom~\cite{KT}.

\begin{definition}
Let \( G\) be a group and let \(H\leq G\) be a subgroup. The semigroup
\[
 P_H(G)=\{g\in  G:gHg^{-1}\leq H\}
\]
is called the \emph{compression semigroup} of \(H\) in \(G\).
We call \(H\) an \emph{infranormal} subgroup of \( G\) if
\(P_H(G)\) generates \(G\) as a group.
\end{definition}

The following observation is due to Kun and Thom; see the remark
following Theorem~C in \cite{KT}.

\begin{lemma}\label{lem:finite-quotient}
Let \(H\) be an infranormal subgroup of \(G\), and let
\(N=\langle\!\langle H\rangle\!\rangle\) be the normal closure
of \(H\) in \(G\). For every homomorphism \(q:G\to F\) to a
finite group \(F\), the subgroup \(q(H)\) is normal in \(q(G)\),
and \(q(H)=q(N)\).
\end{lemma}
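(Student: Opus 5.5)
Replacing \(G\) by \(q(G)\), we may assume \(q\) is surjective. Write \(\bar H=q(H)\), a subgroup of the finite group \(F\). The key observation is that in a finite group, an element that compresses a subgroup into itself must normalize it. Indeed, for \(g\in P_H(G)\) we have \(gHg^{-1}\leq H\), hence
\[
q(g)\bar H q(g)^{-1}\leq \bar H .
\]
Conjugation by \(q(g)\) is injective and \(\bar H\) is finite, so comparing cardinalities gives \(q(g)\bar H q(g)^{-1}=\bar H\). Thus \(q(g)\) lies in the normalizer \(N_F(\bar H)\).

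This holds for every \(g\in P_H(G)\). The normalizer is a subgroup, so it contains the subgroup generated by \(q(P_H(G))\). That subgroup is \(q(\langle P_H(G)\rangle)\). By infranormality, \(\langle P_H(G)\rangle=G\), so \(q(\langle P_H(G)\rangle)=q(G)\). Hence \(q(G)\leq N_F(\bar H)\), which means \(\bar H\) is normal in \(q(G)\).

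For the second assertion, \(H\leq N\) gives \(q(H)\leq q(N)\). For the reverse inclusion, recall that \(N\) is generated by the conjugates \(xhx^{-1}\) with \(x\in G\) and \(h\in H\). Each such conjugate maps to
\[
q(x)q(h)q(x)^{-1}\in \bar H,
\]
by the normality just proved. Since \(\bar H\) is a subgroup containing all these generators, \(q(N)\leq \bar H\), so \(q(H)=q(N)\).

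The only substantive step is the finiteness argument upgrading the inclusion \(q(g)\bar Hq(g)^{-1}\leq\bar H\) to equality. It is exactly what fails in \(G\) itself, where \(tHt^{-1}\) is a proper subgroup of \(H\). Everything else is formal.
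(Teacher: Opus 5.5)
Your proof is correct and follows the paper's argument essentially verbatim: the cardinality comparison upgrades \(q(g)q(H)q(g)^{-1}\leq q(H)\) to equality, infranormality spreads normalization to all of \(q(G)\), and \(q(N)\) then collapses to \(q(H)\). You spell out via the conjugate generators of \(N\) what the paper phrases as \(q(N)\) being the normal closure of \(q(H)\) in \(q(G)\). One minor slip: your opening reduction should replace \(F\), not \(G\), by \(q(G)\), though you never actually use it.
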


\begin{proof}
Let \(g\in P_H(G)\). Since \(gHg^{-1}\leq H\), we have
\(
 q(g)q(H)q(g)^{-1}\leq q(H).
\)
These finite subgroups have the same order, so the inclusion is
an equality. Thus \(q(g)\) normalizes \(q(H)\) for every
\(g\in P_H(G)\). Since \(P_H(G)\) generates \(G\) as a group,
its image generates \(q(G)\). Hence \(q(H)\) is normal in \(q(G)\).
Finally, \(q(N)\) is the normal closure of \(q(H)\) in \(q(G)\) and hence \(q(H)=q(N)\).
\end{proof}

Consequently, for every action of \(G\) on a finite set \(Y\),
every \(H\)-invariant subset of \(Y\) is also \(N\)-invariant.
This follows by applying Lemma~\ref{lem:finite-quotient} to the
homomorphism \(G\to\Sym(Y)\) defining the action.

\subsection{A Kazhdan projection estimate}

Recall that a discrete group \(H\) has property~\((T)\) if there
exist a finite subset \(Q\subseteq H\) and \(\varepsilon>0\)
such that every unitary representation \((\pi,\mathcal{H})\) of \(H\) admitting a
nonzero vector \(\xi \in \mathcal{H}\) with
\[
 \max_{h\in Q}\|\pi(h)\xi-\xi\|<\varepsilon\|\xi\|
\]
has a nonzero invariant vector. The pair \((Q,\varepsilon)\)
is called a \emph{Kazhdan pair}; see
\cite[Definition~1.1.3]{BHV}.

For the remainder of this section, let \(H\) be finitely generated with property~\((T)\), and fix a
finite symmetric generating set \(S_H\). By
\cite[Proposition~1.3.2(i)]{BHV}, there exists \(\varepsilon>0\)
such that \((S_H,\varepsilon)\) is a Kazhdan pair. Fix such an \(\varepsilon\) and set
\(\kappa=\varepsilon^2\). For any unitary representation \((\pi,\mathcal{H})\)
of \(H\), let \(P\) denote the
orthogonal projection onto the subspace \(\mathcal{H}^{H}\) of \(H\)-invariant vectors.

The orthogonal complement \((\mathcal H^H)^{\perp}\) is \(H\)-invariant
and contains no nonzero invariant vectors. By the definition of
a Kazhdan pair, every \(\eta \in (\mathcal{H}^{H})^{\perp} \) satisfies
\(\max_{s\in S_H}\|\pi(s)\eta-\eta\|\geq\varepsilon\|\eta\|\).
Taking \(\eta=\xi-P\xi\) and using \(\pi(s)P\xi=P\xi\), we obtain
\begin{equation}\label{eq:kazhdan}
 \kappa\|\xi-P\xi\|^2
 \leq\max_{s\in S_H}\|\pi(s)\xi-\xi\|^2
 \qquad(\xi\in\mathcal H).
\end{equation}
See also \cite[Proposition~1.1.9]{BHV}. A related application of this estimate to almost invariant
finite partitions appears in \cite[Lemma~3.3]{AT}.

\begin{proposition}\label{prop:finite-inequality}
Let \(G\) be a group containing \(H\) as an infranormal subgroup, and let
\(N=\langle\!\langle H\rangle\!\rangle\) be the normal
closure of \(H\) in \(G\).
For every finite measure-preserving \(G\)-action
\((Y,\nu)\), every \(B\subseteq Y\), and every \(g\in N\),
we have
\begin{equation}\label{eq:finite-inequality}
 \nu(gB\triangle B)
 \leq \frac{4}{\kappa} \max_{s\in S_H}\nu(sB\triangle B).
\end{equation}
\end{proposition}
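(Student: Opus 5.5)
We will apply the Kazhdan estimate \eqref{eq:kazhdan} to the Koopman representation of \(H\) on \(\mathcal H=L^2(Y,\nu)\), given by \((\pi(h)f)(y)=f(h^{-1}y)\), with \(\xi=1_B\). Then \(\pi(h)1_B=1_{hB}\), so
\[
\|\pi(s)\xi-\xi\|^2=\nu(sB\triangle B).
\]
Hence \eqref{eq:kazhdan} gives
\[
\kappa\|1_B-P1_B\|^2\le \max_{s\in S_H}\nu(sB\triangle B).
\]

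The key point is that \(f:=P1_B\) is \(N\)-invariant, and not merely \(H\)-invariant. Since \(Y\) is finite, every \(H\)-invariant function on \(Y\) is constant on \(H\)-orbits. By Lemma~\ref{lem:finite-quotient}, applied to the homomorphism \(q:G\to\Sym(Y)\) defining the action, we have \(q(H)=q(N)\). So the \(H\)-orbits and \(N\)-orbits on \(Y\) coincide, and every \(H\)-invariant vector in \(L^2(Y,\nu)\) is \(N\)-invariant. In particular \(\pi(g)f=f\) for all \(g\in N\), where \(\pi\) now denotes the Koopman representation of all of \(G\).

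Now fix \(g\in N\). Since \(\pi(g)\) is unitary and fixes \(f\), the triangle inequality gives
\[
\|1_{gB}-1_B\|\le \|\pi(g)(1_B-f)\|+\|f-1_B\| = 2\|1_B-f\|.
\]
Squaring yields
\[
\nu(gB\triangle B)\le 4\|1_B-P1_B\|^2\le \frac{4}{\kappa}\max_{s\in S_H}\nu(sB\triangle B),
\]
which is \eqref{eq:finite-inequality}.

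There is no real obstacle here. The only step needing care is the transfer from \(H\)-invariance to \(N\)-invariance of \(P1_B\), and it rests entirely on the finiteness of \(Y\) through Lemma~\ref{lem:finite-quotient}. This finiteness is exactly what fails for general sofic limits, which explains the role of this proposition in the later ultraproduct argument.
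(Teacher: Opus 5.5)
Your proposal is correct and follows the paper's proof essentially verbatim: you apply \eqref{eq:kazhdan} to the Koopman representation with \(\xi=\mathbf 1_B\), use Lemma~\ref{lem:finite-quotient} to get \(q(H)=q(N)\) and hence \(N\)-invariance of \(P\mathbf 1_B\), and bound \(\|\pi(g)(f-Pf)-(f-Pf)\|^2\le 4\|f-Pf\|^2\). The only cosmetic difference is that you phrase the transfer via coinciding orbits, whereas it follows even more directly from \(\pi(N)=\pi(H)\) as sets of operators.
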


\begin{proof}
Let \(\pi\) be the Koopman representation of \(G\) on
\(L^2(Y,\nu)\), defined by
\(
 (\pi(g)f)(y)=f(g^{-1}y).
\)
Let \(P\) be the orthogonal projection onto the \(H\)-invariant
functions, and put \(f=\mathbf1_B\).
Let \(q:G\to\Sym(Y)\) be the homomorphism defining the action.
By Lemma~\ref{lem:finite-quotient}, \(q(H)=q(N)\).
Hence every \(H\)-invariant function is also \(N\)-invariant,
so \(\pi(g)Pf=Pf\) for every \(g\in N\).
Unitarity and \eqref{eq:kazhdan}, applied to the restriction of
\(\pi\) to \(H\), now give
\begin{align*}
 \nu(gB\triangle B)
 &=\|\pi(g)f-f\|_2^2=\|\pi(g)(f-Pf)-(f-Pf)\|_2^2\\
 &\leq4\|f-Pf\|_2^2\leq\frac4\kappa\max_{s\in S_H}\|\pi(s)f-f\|_2^2=\frac4\kappa\max_{s\in S_H}\nu(sB\triangle B).\qedhere
\end{align*}
\end{proof}

\subsection{Passage to weak containment}

\begin{corollary}\label{cor:weak-inequality}
Let \(G\) be a group containing \(H\) as an infranormal
subgroup, and let \(N=\langle\!\langle H\rangle\!\rangle\)
be the normal closure of \(H\) in \(G\).
Let \((\cB,\mu)\) be a measured \(G\)-Boolean algebra
weakly contained in finite actions.
Then, for every \(B\in\cB\) and every \(g\in N\), we have
\begin{equation}\label{eq:weak-inequality}
 \mu(gB\triangle B)
 \leq\frac{4}{\kappa}
 \max_{s\in S_H}\mu(sB\triangle B).
\end{equation}
In particular, \(\cB^H=\cB^N\).
\end{corollary}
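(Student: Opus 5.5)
The plan is to transfer the finite inequality \eqref{eq:finite-inequality} of Proposition~\ref{prop:finite-inequality} through the ultraproduct, using that the constant \(4/\kappa\) is uniform. By Definition~\ref{def:weak}, there is a measure-preserving \(G\)-equivariant Boolean homomorphism
\[
 \Phi:(\cB,\mu)\to\prod_{n\to\omega}(\cB_n,\mu_n),
\]
where each \((\cB_n,\mu_n)\) is a finite measured \(G\)-Boolean algebra. Since \(\Phi\) preserves Boolean operations, measure and the \(G\)-action, we have for all \(B\in\cB\) and \(g\in G\)
\[
 \mu(gB\triangle B)=\mu_\omega\bigl(g\Phi(B)\triangle\Phi(B)\bigr).
\]
So it suffices to prove \eqref{eq:weak-inequality} for elements of the ultraproduct.

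Write \(\Phi(B)=[B_n]\). By definition of the ultraproduct measure and action,
\[
 \mu_\omega\bigl(g[B_n]\triangle[B_n]\bigr)=\lim_{n\to\omega}\mu_n(gB_n\triangle B_n).
\]
Each finite measured Boolean algebra \(\cB_n\) is, after quotienting by null sets, the algebra of subsets of a finite set of atoms. This set of atoms carries a measure \(\nu_n\), which need not be uniform but is \(G\)-invariant, because \(G\) permutes the atoms in a measure-preserving way. So \(\cB_n\) is a finite measure-preserving \(G\)-action \((Y_n,\nu_n)\) in the sense of Proposition~\ref{prop:finite-inequality}. The proof of that proposition only uses the Koopman representation on \(L^2(Y_n,\nu_n)\) and Lemma~\ref{lem:finite-quotient}, so it works for arbitrary invariant measures.

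This is the one point that needs care. Atoms of positive measure are permuted among themselves, so the action factors through a homomorphism \(G\to\Sym(Y_n)\), and Lemma~\ref{lem:finite-quotient} applies to it.

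Hence for every \(n\) and every \(g\in N\),
\[
 \mu_n(gB_n\triangle B_n)\le\frac4\kappa\max_{s\in S_H}\mu_n(sB_n\triangle B_n).
\]
Because \(S_H\) is finite, the \(\omega\)-limit of the maximum is the maximum of the \(\omega\)-limits. Taking \(\lim_{n\to\omega}\) therefore gives \eqref{eq:weak-inequality}.

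For the final assertion, \(N\supseteq H\) gives \(\cB^N\subseteq\cB^H\). Conversely, if \(B\in\cB^H\), then every \(sB=B\) for \(s\in S_H\), so the right-hand side of \eqref{eq:weak-inequality} vanishes. Thus \(\mu(gB\triangle B)=0\), i.e.\ \(gB=B\) in the measure algebra, for every \(g\in N\), so \(B\in\cB^N\).
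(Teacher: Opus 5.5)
Your proposal is correct and follows essentially the same route as the paper: you pull the uniform finite inequality of Proposition~\ref{prop:finite-inequality} back through the equivariant measure-preserving embedding into an ultraproduct of finite actions, and you pass to the $\omega$-limit using finiteness of $S_H$ to commute the maximum with the limit. Your extra remark identifying each finite measured $G$-Boolean algebra with a $G$-invariant, possibly non-uniform, measure on its atoms makes explicit a step the paper leaves implicit; this step is consistent with the paper, since Proposition~\ref{prop:finite-inequality} is already stated for arbitrary finite measure-preserving actions $(Y,\nu)$.
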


\begin{proof}
By weak containment, choose a measure-preserving
\(G\)-equivariant embedding of \((\cB,\mu)\) into a
metric ultraproduct
\(
 \prod_{n\to\omega}\bigl(\mathcal P(Y_n),\nu_n\bigr)
\)
associated to finite measure-preserving \(G\)-actions
\((Y_n,\nu_n)\).
Fix \(B\in\cB\), and represent its image by \([B_n]\).
For each \(g\in N\), apply
Proposition~\ref{prop:finite-inequality} to \(B_n\) and
take the ultralimit along \(\omega\). Since \(S_H\) is
finite, the maximum commutes with the ultralimit, giving
\eqref{eq:weak-inequality}.
If \(B\in\cB^H\), the right-hand side vanishes, so
\(B\in\cB^N\). The reverse inclusion follows from
\(H\leq N\).
\end{proof}
For an infranormal subgroup \(H\leq G\), Kun and Thom prove
that the algebra of \(H\)-invariant functions is \(G\)-invariant
in every sofic probability-measure-preserving action when
both groups have property~\((T)\) \cite[Theorem~C]{KT}.
Corollary~\ref{cor:weak-inequality} concerns actions weakly
contained in finite actions and requires property~\((T)\)
only for \(H\).

In the following corollary, \(\mu_n\) denotes normalized
counting measure on \(V_n\), and \(\sigma_n(g)\) denotes
the evaluation under \(\sigma_n\) of a fixed word
representing \(g\in G\). The asymptotic conditions are
independent of these choices.

\begin{corollary}\label{cor:criterion}
Let \(G\) be a finitely presented group containing \(H\)
as an infranormal subgroup, and let
\(N=\langle\!\langle H\rangle\!\rangle\) be the normal
closure of \(H\) in \(G\).
Suppose that a sofic approximation \((\sigma_n)\) of \(G\)
on finite sets \(V_n\) admits subsets \(E_n\subseteq V_n\)
and an element \(w\in N\) such that
\[
 \begin{aligned}
 \mu_n\bigl(\sigma_n(s)E_n\triangle E_n\bigr)
 &\longrightarrow 0
 &&\text{for every }s\in S_H,\\
 \mu_n\bigl(\sigma_n(w)E_n\triangle E_n\bigr)
 &\longrightarrow c>0.
 \end{aligned}
\]
Then, for every nonprincipal ultrafilter \(\omega\) on
\(\mathbb N\), the induced limit action is not weakly
contained in finite actions. In particular, \(G\) is not
stable in finite actions.
\end{corollary}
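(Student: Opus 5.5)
The plan is to pass to the limit action and argue by contradiction using Corollary~\ref{cor:weak-inequality}. Fix a nonprincipal ultrafilter \(\omega\) on \(\mathbb N\), and let \((\cB_\omega,\mu_\omega)\) be the ultraproduct \(\prod_{n\to\omega}(\mathcal P(V_n),\mu_n)\) carrying the \(G\)-action from Lemma~\ref{lem:limit-action}. Set \(E=[E_n]\in\cB_\omega\). By the definition of the induced action, the action of \(g\in G\) on a class \([B_n]\) is given by \([\sigma_n(v)B_n]\) for any word \(v\) representing \(g\). The remark preceding Lemma~\ref{lem:limit-action} shows that the resulting ultralimits do not depend on the choice of word. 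This justifies writing \(\sigma_n(g)\) as in the statement.

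I would first compute the relevant displacements in the limit. For each \(s\in S_H\) we have
\[
\mu_\omega(sE\triangle E)=\lim_{n\to\omega}\mu_n\bigl(\sigma_n(s)E_n\triangle E_n\bigr)=0.
\]
Since \(S_H\) generates \(H\) and the action is by measure-preserving automorphisms, it follows that \(E\in\cB_\omega^H\). To spell out this step, the stabilizer \(\{h\in H:hE=E\}\) is a subgroup containing \(S_H\), so it is all of \(H\). For \(w\) we get
\[
\mu_\omega(wE\triangle E)=\lim_{n\to\omega}\mu_n\bigl(\sigma_n(w)E_n\triangle E_n\bigr)=c>0,
\]
because an ordinary limit equals its ultralimit. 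Hence \(E\notin\cB_\omega^N\), since \(w\in N\).

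Now suppose the limit action were weakly contained in finite actions. Corollary~\ref{cor:weak-inequality} would then give \(\cB_\omega^H=\cB_\omega^N\), which contradicts the existence of \(E\). Alternatively, inequality \eqref{eq:weak-inequality} yields the contradiction \(c\le \frac4\kappa\cdot 0\) directly. Therefore the limit action is not weakly contained in finite actions, for every \(\omega\).

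The "in particular" clause follows from Definition~\ref{def:stable}. Here \(G\) is finitely presented, and \((\sigma_n)\) is a sofic approximation together with an ultrafilter whose limit action fails to be weakly contained in finite actions, so \(G\) is not stable in finite actions.

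The only point requiring care, though I do not expect it to be a real obstacle, is well-definedness: \(\sigma_n(w)\) refers to a fixed word, and the limit action of \(w\) on \(E\) must agree with that word's action coordinatewise. This is exactly what Lemma~\ref{lem:limit-action} and the preceding remark provide.
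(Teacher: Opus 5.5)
Your proposal is correct and follows the paper's proof essentially step for step. You form \(E=[E_n]\), deduce \(E\in\cB_\omega^H\) from the vanishing displacements under \(S_H\), and get \(\mu_\omega(wE\triangle E)=c>0\). Corollary~\ref{cor:weak-inequality} then gives the contradiction, and Definition~\ref{def:stable} gives the final clause.
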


\begin{proof}
Fix a nonprincipal ultrafilter \(\omega\) on \(\mathbb N\),
and let \((\cB_\omega,\mu_\omega)\) be the induced limit
action. Set \(E=[E_n]\). The first assumption gives
\(
 \mu_\omega(sE\triangle E)=0
\)
for every \(s\in S_H\).
Since \(S_H\) generates \(H\), we have
\(E\in\cB_\omega^H\). The second assumption gives
\(
 \mu_\omega(wE\triangle E)=c>0.
\)
If this limit action were weakly contained in finite
actions, Corollary~\ref{cor:weak-inequality} would imply
\(E\in\cB_\omega^N\), contradicting the last equality
because \(w\in N\). The final assertion follows from
Definition~\ref{def:stable}.
\end{proof}

\section{The affine ascending HNN extension}\label{sec:group}

Let \(H=\Z^3\rtimes\SL_3(\Z)\), where the multiplication is given by
\(
 (u,B)(v,D):=(u+Bv,BD).
\)
The group \(H\) is finitely presented and has Kazhdan's
property~\((T)\); see \cite[Proposition~1.1]{Cornulier}.

The map \(\varphi:H\to H\), defined by \(\varphi(u,B)=(2u,B)\), is an
injective endomorphism whose image has index~\(8\). We define the associated ascending HNN extension by
\begin{equation}\label{eq:hnn}
 G_0
 =\langle H,t\mid tht^{-1}=\varphi(h)\ (h\in H)\rangle.
\end{equation}
For background on HNN extensions, see Lyndon--Schupp~\cite[Chapter~IV, \S\,2]{LS}.

\begin{proposition}\label{prop:hnn}
The group \(G_0\) is isomorphic to
\(G=\Z[1/2]^3\rtimes\bigl(\SL_3(\Z)\times\Z\bigr)\).
The isomorphism \(\Phi:G_0\to G\) is determined by
\(\Phi((u,B))=(u,B,0)\) and \(\Phi(t)=(0,I_3,1)\).
Every element of \(G_0\) has an expression \(t^{-r}(u,B)t^s\)
with \((u,B)\in H\) and \(r,s\geq0\), and
\begin{equation}\label{eq:collected-form}
 \Phi\bigl(t^{-r}(u,B)t^s\bigr)=(2^{-r}u,B,s-r).
\end{equation}
\end{proposition}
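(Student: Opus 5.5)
We prove the statement by building \(\Phi\) from the universal property of the HNN presentation, then establishing surjectivity and injectivity through the collected normal form \eqref{eq:collected-form}.

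\textbf{Well-definedness.} Define \(\Phi\) on \(H\) by \((u,B)\mapsto(u,B,0)\). This is a homomorphism, since the multiplication in \(G\) restricted to \(\Z[1/2]^3\rtimes(\SL_3(\Z)\times\{0\})\) is \((u,B,0)(v,D,0)=(u+Bv,BD,0)\). Send \(t\mapsto\tau:=(0,I_3,1)\). A direct computation in the semidirect product gives
\[
\tau(u,B,0)\tau^{-1}=(2u,B,0)=\Phi(\varphi(u,B)),
\]
because \((0,I,1)\) acts on \(\Z[1/2]^3\) by multiplication by \(2\). Hence all defining relations of \eqref{eq:hnn} are respected, and \(\Phi\) extends to a homomorphism \(G_0\to G\).

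\textbf{Normal form.} Next we show that every element of \(G_0\) can be written as \(t^{-r}ht^s\) with \(h\in H\) and \(r,s\ge0\). The set \(W\) of such elements contains the generators \(H\cup\{t^{\pm1}\}\), so it suffices to show \(W\) is closed under multiplication. The key rewriting is
\[
t^{s}t^{-r'}=\begin{cases} t^{s-r'} & \text{if } s\ge r',\\ t^{-(r'-s)} & \text{if } s<r'.\end{cases}
\]
We combine this with \(t^k h=\varphi^k(h)t^k\) and \(ht^{-k}=t^{-k}\varphi^k(h)\) for \(k\ge0\), which follow from the relation \(tht^{-1}=\varphi(h)\).

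\begin{itemize}
\item If \(s\ge r'\), then
\[
t^{-r}ht^{s}\,t^{-r'}h't^{s'}=t^{-r}\,h\,\varphi^{s-r'}(h')\,t^{s-r'+s'}.
\]
\item If \(s<r'\), then with \(k=r'-s\),
\[
t^{-r}ht^{s}\,t^{-r'}h't^{s'}=t^{-r}\,h\,t^{-k}h't^{s'}=t^{-(r+k)}\,\varphi^{k}(h)\,h'\,t^{s'}.
\]
\end{itemize}

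In both cases the product lies in \(W\). Formula \eqref{eq:collected-form} is then a direct computation: \(\tau^{-r}(u,B,0)\tau^{s}=(2^{-r}u,B,s-r)\), since conjugating by \(\tau^{-r}\) scales the translation part by \(2^{-r}\).

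\textbf{Surjectivity.} Take any \((v,B,k)\in G\). Choose \(r\ge0\) with \(2^rv\in\Z^3\) and \(r+k\ge0\), and set \(s=r+k\). Then \(\Phi(t^{-r}(2^rv,B)t^s)=(v,B,k)\) by \eqref{eq:collected-form}.

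\textbf{Injectivity.} Suppose \(\Phi(t^{-r}(u,B)t^s)\) is trivial. Then \(s=r\), \(B=I\) and \(2^{-r}u=0\), so \(u=0\). The element is therefore \(t^{-r}t^{r}=1\).

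Injectivity uses only the normal form, so it needs neither Britton's lemma nor any other HNN theory. The one step requiring care is the closure of \(W\) under multiplication, specifically checking the direction of the conjugation identities. Since \(\varphi\) is defined only on \(H\), we may only move \(t^{k}\) rightward past \(H\) or \(t^{-k}\) leftward past \(H\) for \(k\ge0\), and the two cases above are arranged so that only these moves occur.
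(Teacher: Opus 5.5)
Your proof is correct and follows essentially the same route as the paper: you obtain \(\Phi\) from the HNN presentation, collect every element into the form \(t^{-r}(u,B)t^s\), and read off surjectivity and injectivity from \eqref{eq:collected-form}. Your case analysis showing that the set of collected expressions is closed under multiplication spells out the rewriting step that the paper states in a single sentence.
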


\begin{proof}
Multiplication in \(G\) is given by
\begin{equation}\label{eq:group-law}
 (v,B,k)(v',B',k')
 =(v+2^kBv',BB',k+k').
\end{equation}
The prescribed assignments respect multiplication in \(H\) and
the HNN relations, so they define a homomorphism \(\Phi:G_0\to G\).
The relations \(th=\varphi(h)t\) and
\(ht^{-1}=t^{-1}\varphi(h)\) move \(t\) to the right and \(t^{-1}\)
to the left across elements of \(H\). Thus every element of \(G_0\)
has an expression \(t^{-r}(u,B)t^s\) with \(r,s\geq0\), whose image
under \(\Phi\) is given by \eqref{eq:collected-form}.
A trivial image forces \(u=0\), \(B=I_3\), and \(s=r\), so the
original element is trivial. Thus \(\Phi\) is injective.
For surjectivity, given \((v,B,k)\in G\), choose \(r\geq0\) so that
\(2^rv\in\Z^3\) and \(r+k\geq0\). Then
\(\Phi\bigl(t^{-r}(2^rv,B)t^{r+k}\bigr)=(v,B,k)\).
\end{proof}

We identify \(G_0\) with \(G\) via \(\Phi\).
A finite presentation of \(G\) is obtained from a finite presentation
of \(H\) by adding \(t\) and the relations
\(tst^{-1}=\varphi(s)\) for the generators \(s\) of that presentation.
The map
\begin{equation}\label{eq:affine-embedding}
 G\longrightarrow\GL_4(\Z[1/2]),
 \qquad
 (v,B,k)\longmapsto
 \begin{pmatrix}
  2^kB&v\\
  0&1
 \end{pmatrix}
\end{equation}
is a homomorphism by \eqref{eq:group-law}.
It is injective because the determinant \(2^{3k}\) determines \(k\),
and the remaining entries determine \(B\) and \(v\).
Reduction modulo odd primes separates points, so \(G\), and hence
\(H\), is residually finite.

\begin{lemma}\label{lem:group-structure}
The subgroup \(H\) is infranormal in \(G\) but is not normal.
We have
\begin{equation}\label{eq:strict-inclusions}
 tHt^{-1}
 =2\Z^3\rtimes\SL_3(\Z)
 \subsetneq H,
 \qquad
 t^{-1}Ht
 =\tfrac12\Z^3\rtimes\SL_3(\Z)
 \supsetneq H.
\end{equation}
Let \(\chi:G\to\Z\) be the homomorphism given by
\(\chi(v,B,k)=k\).
Then the normal closure \(N=\langle\!\langle H\rangle\!\rangle\)
of \(H\) in \(G\) satisfies
\begin{equation}\label{eq:normal-closure}
 N
 =\bigcup_{r\geq0}t^{-r}Ht^r
 =\Z[1/2]^3\rtimes\SL_3(\Z)
 =\ker\chi.
\end{equation}
Let \(e_1\) be the first standard basis vector. If we set \(a=(e_1,I_3)\in H\) and \(w=t^{-1}at\),
then \(w\) corresponds to translation by \(\tfrac12e_1\)
and belongs to \(N\setminus H\).
\end{lemma}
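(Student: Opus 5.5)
The plan is to compute everything through the explicit group law \eqref{eq:group-law} and the identification of Proposition~\ref{prop:hnn}, under which \(H=\{(u,B,0):u\in\Z^3\}\) and \(t=(0,I_3,1)\).

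\emph{Conjugates of \(H\) and infranormality.} With \(t^{-1}=(0,I_3,-1)\), the group law gives
\[
t(u,B,0)t^{-1}=(2u,B,0),\qquad t^{-1}(u,B,0)t=(\tfrac12u,B,0).
\]
Letting \(u\) range over \(\Z^3\) yields the two equalities in \eqref{eq:strict-inclusions}. The inclusions are strict because \(e_1\notin2\Z^3\) and \(\tfrac12e_1\notin\Z^3\). Hence \(t\in P_H(G)\) and \(H\subseteq P_H(G)\). Since \(H\cup\{t\}\) generates \(G\) by Proposition~\ref{prop:hnn}, \(P_H(G)\) generates \(G\), so \(H\) is infranormal. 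The element \(t^{-1}\) conjugates \(H\) onto a strictly larger subgroup, so \(H\) is not normal.

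\emph{Normal closure.} Let \(M=\bigcup_{r\ge0}t^{-r}Ht^r\). Iterating the formula above gives \(t^{-r}Ht^r=2^{-r}\Z^3\rtimes\SL_3(\Z)\). These subgroups form an increasing chain whose union is \(\Z[1/2]^3\rtimes\SL_3(\Z)\), and this is exactly \(\ker\chi\), since \(\chi\) records the third coordinate. Now \(\ker\chi\) is normal and contains \(H\), so \(N\subseteq\ker\chi\). Conversely, each \(t^{-r}Ht^r\) is a conjugate of \(H\) and so lies in \(N\), giving \(\ker\chi=M\subseteq N\). This proves \eqref{eq:normal-closure}.

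\emph{The element \(w\).} The conjugation formula gives \(w=t^{-1}at=(\tfrac12e_1,I_3,0)\), which is translation by \(\tfrac12e_1\). It lies in \(t^{-1}Ht\subseteq N\), and it is not in \(H\) because \(\tfrac12e_1\notin\Z^3\).

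Every step is a direct computation. The only point needing care is getting the signs of the conjugation correct, that is, checking that \(t\) doubles and \(t^{-1}\) halves the translation part. I do not expect a genuine obstacle.
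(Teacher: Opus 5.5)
Your proposal is correct and follows the paper's proof essentially step for step: conjugation identities, infranormality via \(t\in P_H(G)\) and \(G=\langle H,t\rangle\), the increasing union \(t^{-r}Ht^r=2^{-r}\Z^3\rtimes\SL_3(\Z)\) equal to \(\ker\chi\) with the two inclusions against \(N\), and the explicit form of \(w\). The only cosmetic difference is that you compute the conjugates directly from the group law \eqref{eq:group-law}, whereas the paper reads them off from the collected-form formula \eqref{eq:collected-form}; this is the same computation.
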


\begin{proof}
The two conjugation identities follow from
\eqref{eq:collected-form}.
Every element of \(H\) normalizes \(H\), so \(H \subseteq P_H(G)\). Moreover, the inclusion \(tHt^{-1}\leq H\) gives \(t\in P_H(G)\). Since \(G=\langle H,t \rangle\), the compression semigroup \(P_H(G)\) generates \(G\) as a group. Thus \(H\) is infranormal in \(G\).
The strict inclusions in \eqref{eq:strict-inclusions}
show that \(H\) is not normal.

For every \(r\geq0\), equation~\eqref{eq:collected-form} gives
\[
 t^{-r}Ht^r
 =2^{-r}\Z^3\rtimes\SL_3(\Z).
\]
These subgroups form an increasing sequence whose union is
\(\Z[1/2]^3\rtimes\SL_3(\Z)=\ker\chi\).
Since \(\ker\chi\) is normal in \(G\) and contains \(H\),
we have \(N\leq\ker\chi\).
Conversely, every conjugate \(t^{-r}Ht^r\) is contained in
\(N\), so \(\ker\chi\leq N\).
This proves \eqref{eq:normal-closure}.

Finally, \eqref{eq:collected-form} gives
\(w=(\tfrac12e_1,I_3,0)\), so \(w\in N\setminus H\).
\end{proof}

\section{The layered approximation and an invariant measurable subset}
\label{sec:models}

The construction below extends the layered approximation
of \(BS(1,2)\) described in
\cite[Remark~4.4]{ThomConditional} by incorporating
the action of \(\SL_3(\Z)\).

\subsection{Finite permutations}

For \(n\geq1\), we set
\begin{equation}\label{eq:model-sets}
 R_n=\Z/2^n\Z,
 \qquad
 X_n=\{0,1,\ldots,n\}\times R_n^{1\times3}\times R_n.
\end{equation}
We write the points of \(X_n\) as \((i,\ell,x)\).
Each layer \(\{i\}\times R_n^{1\times3}\times R_n\)
has cardinality \(2^{4n}\).
Bars denote reduction modulo \(2^n\).
For \((u,B)\in H\), define
\begin{equation}\label{eq:rho}
 \rho_n(u,B)(i,\ell,x)
 =\bigl(i,\ell\overline B^{-1},
 x+2^{n-i}\ell\overline B^{-1}\overline u\bigr),
\end{equation}
and put
\begin{equation}\label{eq:shift}
 T_n(i,\ell,x)
 =\bigl(i+1\bmod(n+1),\ell,x\bigr).
\end{equation}
Arithmetic in the last two coordinates is performed over the ring \(R_n\).
In particular, the translation term in \eqref{eq:rho}
vanishes on layer~\(0\).
The shift \(T_n\) is cyclic and leaves the coordinates
\(\ell\) and \(x\) unchanged.

\begin{lemma}\label{lem:rho}
The map \(\rho_n:H\to\Sym(X_n)\) is a homomorphism.
For every \(h\in H\),
\begin{equation}\label{eq:hnn-error}
 d_{\mathrm H}\bigl(T_n\rho_n(h)T_n^{-1},
                   \rho_n(\varphi(h))\bigr)
 \leq\frac1{n+1}.
\end{equation}
\end{lemma}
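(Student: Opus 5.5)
First, I will check that \(\rho_n(u,B)\) is a well-defined permutation and that \(\rho_n\) is multiplicative. Fix a layer \(i\) and put \(c_i=2^{n-i}\in R_n\), so that \(\rho_n(u,B)(i,\ell,x)=(i,\ell\overline B^{-1},x+c_i\ell\overline B^{-1}\overline u)\). I will compose \(\rho_n(u,B)\circ\rho_n(v,D)\) directly. Applying \(\rho_n(v,D)\) first gives \((i,\ell\overline D^{-1},x+c_i\ell\overline D^{-1}\overline v)\). Applying \(\rho_n(u,B)\) then yields the linear part \(\ell\overline D^{-1}\overline B^{-1}=\ell\,\overline{BD}^{-1}\). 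The translation term becomes
\[
 x+c_i\ell\overline D^{-1}\overline v+c_i\ell\overline D^{-1}\overline B^{-1}\overline u
 =x+c_i\ell\,\overline{BD}^{-1}\bigl(\overline B\overline v+\overline u\bigr).
\]
This is exactly \(\rho_n(u+Bv,BD)\), by the multiplication law \((u,B)(v,D)=(u+Bv,BD)\). Reduction modulo \(2^n\) is a ring homomorphism, and \(\overline B\in\SL_3(R_n)\) is invertible. Also \(\rho_n(0,I_3)=\id\). So each \(\rho_n(h)\) is a bijection with inverse \(\rho_n(h^{-1})\), and \(\rho_n\) is a homomorphism.

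For the estimate \eqref{eq:hnn-error}, I will compute \(T_n\rho_n(h)T_n^{-1}\) on a point \((i,\ell,x)\) with \(i\ge1\). Here \(T_n^{-1}\) sends it to layer \(i-1\) without changing \(\ell\) or \(x\). Then \(\rho_n(u,B)\) adds \(2^{n-i+1}\ell\overline B^{-1}\overline u\) and replaces \(\ell\) by \(\ell\overline B^{-1}\). Finally \(T_n\) returns the point to layer \(i\). The result is
\[
 \bigl(i,\ell\overline B^{-1},x+2^{n-i}\ell\overline B^{-1}\,\overline{2u}\bigr)
 =\rho_n(2u,B)(i,\ell,x)=\rho_n(\varphi(h))(i,\ell,x).
\]
The two permutations therefore agree on every layer \(i=1,\ldots,n\). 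They can differ only on layer \(0\), which is the image under \(T_n\) of layer \(n\) because of the cyclic wraparound. That layer has proportion \(1/(n+1)\) of \(X_n\), which gives \eqref{eq:hnn-error}.

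There is no real obstacle; the only points needing care are bookkeeping ones. One must track that \(T_n^{-1}\) lowers the layer index, so that the factor becomes \(2^{n-(i-1)}=2\cdot2^{n-i}\). One must also remember that \(2^{n-i}\) is read as an element of \(R_n\), so on layer \(0\) it is \(2^n=0\). Finally, the exceptional set is exactly the single layer \(0\), and the estimate does not depend on \(h\).
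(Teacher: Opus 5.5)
Your proof is correct and follows the paper's argument. Both use the same direct composition for the homomorphism property, then a layer-by-layer check that the two permutations agree off a single layer. The only cosmetic difference is that you evaluate the conjugate \(T_n\rho_n(h)T_n^{-1}\) directly, so the exceptional layer is layer \(0\). The paper instead compares \(T_n\rho_n(h)\) with \(\rho_n(\varphi(h))T_n\), so its exceptional layer is layer \(n\), and it then invokes bi-invariance of the Hamming metric.
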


\begin{proof}
For \((u,B),(v,D)\in H\), composing the maps in \eqref{eq:rho} gives
\[
 \rho_n(u,B)\rho_n(v,D)(i,\ell,x)
 =\bigl(i,\ell\overline{BD}^{-1},
 x+2^{n-i}\ell\overline{BD}^{-1}\overline{u+Bv}\bigr)
 =\rho_n(u+Bv,BD)(i,\ell,x).
\]
Taking \((v,D)=(u,B)^{-1}\) shows that
\(\rho_n((u,B)^{-1})\) is the inverse of \(\rho_n(u,B)\).
Hence the maps are permutations and
\(\rho_n:H\to\Sym(X_n)\) is a homomorphism.

Write \(h=(u,B)\in H\), so that \(\varphi(h)=(2u,B)\).
For \(i<n\), the permutations \(T_n\rho_n(h)\) and
\(\rho_n(\varphi(h))T_n\) both send \((i,\ell,x)\) to
\[
 \bigl(i+1,\ell\overline B^{-1},
 x+2^{n-i}\ell\overline B^{-1}\overline u\bigr),
\]
because \(2^{n-i-1}(2\overline u)=2^{n-i}\overline u\).
For \(i=n\), however,
\begin{align*}
 T_n\rho_n(h)(n,\ell,x)
 &=\bigl(0,\ell\overline B^{-1},
 x+\ell\overline B^{-1}\overline u\bigr),\\
 \rho_n(\varphi(h))T_n(n,\ell,x)
 &=\bigl(0,\ell\overline B^{-1},x\bigr).
\end{align*}
Thus the two permutations agree away from one layer.
Bi-invariance of the Hamming metric proves \eqref{eq:hnn-error}.
\end{proof}

To ensure asymptotic freeness, we take the product with
actions on finite quotients of \(G\), as in
\cite[Lemma~3.2]{AT}.
Since \(G\) is residually finite, we may choose finite quotient maps
\(p_n:G\to Q_n\) such that, for every \(g\neq1\), we have
\(p_n(g)\neq1\) for all sufficiently large \(n\).
Set
\begin{equation}\label{eq:product-model}
 V_n=X_n\times Q_n
\end{equation}
and equip \(V_n\) with normalized counting measure \(\mu_n\).
For \(h\in H\), \(z\in X_n\), and \(y\in Q_n\), define
\begin{equation}\label{eq:product-permutations}
 \begin{aligned}
 \widetilde{\rho}_n(h)(z,y)
 &=\bigl(\rho_n(h)z,p_n(h)y\bigr),\\
 \widetilde{T}_n(z,y)
 &=\bigl(T_nz,p_n(t)y\bigr).
 \end{aligned}
\end{equation}
The map \(\widetilde{\rho}_n:H\to\Sym(V_n)\) is a homomorphism.
Since \(p_n\) is a homomorphism, the relation \(tht^{-1}=\varphi(h)\) holds exactly on the \(Q_n\)-coordinate. Therefore,
\begin{equation}\label{eq:product-hnn-error}
 d_{\mathrm H}\bigl(\widetilde{T}_n\widetilde{\rho}_n(h)\widetilde{T}_n^{-1},\widetilde{\rho}_n(\varphi(h))\bigr)\leq\frac1{n+1}.
\end{equation}

Fix a finite presentation of \(H\) with finite generating set \(S_H\),
and let \(S=S_H\cup\{t\}\) be the generating set of the associated
HNN presentation of \(G\) described in Section~\ref{sec:group}.
Let \(q:F(S)\to G\) denote the associated quotient map.
Define the homomorphism
\[
 \sigma_n:F(S)\longrightarrow\Sym(V_n)
\]
on the free generators by
\(\sigma_n(s)=\widetilde{\rho}_n(s)\) for \(s\in S_H\)
and \(\sigma_n(t)=\widetilde{T}_n\).

\begin{proposition}\label{prop:sofic}
The sequence \((\sigma_n)_{n\geq1}\) is a sofic approximation of \(G\).
\end{proposition}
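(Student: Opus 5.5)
We verify the two conditions of Definition~\ref{def:sofic} separately.

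\emph{Asymptotic homomorphism.} Since \(G\) is given by the finite presentation with relators \(R_H\cup\{tst^{-1}\varphi(s)^{-1}: s\in S_H\}\), the kernel \(\ker q\) is the normal closure of these finitely many relators in \(F(S)\). The set of \(v\in F(S)\) with \(d_{\mathrm H}(\sigma_n(v),\id)\to0\) is a subgroup, because
\[
d_{\mathrm H}(\sigma_n(vv'),\id)\le d_{\mathrm H}(\sigma_n(v),\id)+d_{\mathrm H}(\sigma_n(v'),\id).
\]
It is also closed under conjugation, because the Hamming metric is bi-invariant. It therefore suffices to check the relators. For a relator of \(H\) the permutation is trivial, since \(\widetilde\rho_n\) is a homomorphism on \(H\) (Lemma~\ref{lem:rho} together with the fact that \(p_n\) is a homomorphism). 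For an HNN relator, the word \(\varphi(s)\) in \(S_H\) evaluates under \(\sigma_n\) to \(\widetilde\rho_n(\varphi(s))\), again because \(\widetilde\rho_n\) is a homomorphism. Then \eqref{eq:product-hnn-error} gives a distance at most \(1/(n+1)\to0\).

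\emph{Asymptotic freeness.} Fix \(v\notin\ker q\) and put \(g=q(v)\neq1\). The permutation \(\sigma_n(v)\) acts on \(V_n=X_n\times Q_n\). Its second coordinate is exactly left multiplication by \(p_n(g)\) on \(Q_n\), since every generator acts there through the homomorphism \(p_n\circ q\). Its first coordinate is some permutation \(\tau_n\) of \(X_n\), the evaluation of \(v\) under \(\rho_n\) and \(T_n\). Once \(n\) is large, \(p_n(g)\neq1\), so left multiplication by \(p_n(g)\) has no fixed points. Hence \(\sigma_n(v)(z,y)\neq(z,y)\) for every point, and \(d_{\mathrm H}(\sigma_n(v),\id)=1\) for all large \(n\). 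This is the point of taking the product with the quotient actions, as in \cite[Lemma~3.2]{AT}.

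The main subtlety is bookkeeping: \(\sigma_n\) is defined on \(F(S)\), and the claims above use that each letter acts on the \(Q_n\)-factor by \(p_n\) of its image in \(G\). This holds by \eqref{eq:product-permutations}, with \(p_n(h)\) for \(s\in S_H\) and \(p_n(t)\) for \(t\). No estimate on the \(X_n\)-factor is needed for freeness, and on that factor only the finitely many relator errors from Lemma~\ref{lem:rho} enter.
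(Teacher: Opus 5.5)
Your proposal is correct and follows the same route as the paper: asymptotic freeness comes from the fixed-point-free action of \(p_n(g)\) on the \(Q_n\)-factor, and the asymptotic homomorphism condition comes from the \(1/(n+1)\) bound on the HNN relators together with bi-invariance of the Hamming metric. The only difference is that you spell out the reduction from all of \(\ker q\) to the finitely many defining relators, which the paper leaves implicit: the words with vanishing asymptotic defect form a normal subgroup, and the relators of \(H\) evaluate exactly to the identity.
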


\begin{proof}
The asymptotic homomorphism condition follows from
\eqref{eq:product-hnn-error} and bi-invariance of the Hamming metric.
To verify asymptotic freeness, fix \(g\neq1\)
and \(v\in F(S)\) with \(q(v)=g\).
The permutation \(\sigma_n(v)\) acts on the \(Q_n\)-coordinate
by \(y\mapsto p_n(g)y\).
For all sufficiently large \(n\), we have \(p_n(g)\neq1\),
so \(\sigma_n(v)\) has no fixed points and
\(
 d_{\mathrm H}\bigl(\sigma_n(v),\mathrm{id}\bigr)=1.
\)
This proves asymptotic freeness.
\end{proof}

\noindent\emph{Remark.}
The layered permutations \(\rho_n\) and \(T_n\) also define
a sofic approximation of \(G\).
The additional finite quotient factor makes asymptotic freeness
immediate and avoids a separate fixed-point calculation.

\subsection{An exactly \(H\)-invariant measurable subset}

We now construct a measurable subset \( E_n \subseteq V_n\)
that is invariant under \(\widetilde{\rho}_n(h)\) for every \(h\in H\).
For \(1\leq m\leq n\), define \(b_m:R_n\to\{0,1\}\) by
\begin{equation}\label{eq:digit}
 b_m(r)=
 \begin{cases}
  0,&r\bmod 2^m<2^{m-1},\\
  1,&r\bmod 2^m\geq2^{m-1},
 \end{cases}
\end{equation}
where residues modulo \(2^m\) are represented in
\(\{0,\ldots,2^m-1\}\).
Thus \(b_m\) reads the binary digit in position \(m-1\).
In particular, for \(r,c\in R_n\),
\begin{equation}\label{eq:digit-flip}
 b_m(r+2^{m-1}c)
 \equiv b_m(r)+(c\bmod2)\pmod2.
\end{equation}
Define
\begin{equation}\label{eq:measurable-subset}
 E_n=\bigl\{(i,\ell,x,y)\in V_n:
             i<n,\ b_{n-i}(x)=1\bigr\}.
\end{equation}
The condition \(i<n\) ensures that \(b_{n-i}\) is defined
and that \(E_n\) contains no points of the last layer.

On each layer \(i<n\), exactly half of the values of \(x\)
satisfy the digit condition, independently of \(\ell\) and \(y\).
Therefore,
\begin{equation}\label{eq:subset-measure}
 \mu_n(E_n)=\frac{n}{2(n+1)}\longrightarrow\frac12.
\end{equation}

\begin{lemma}\label{lem:subset-invariant}
For every \(h\in H\), \(\widetilde{\rho}_n(h)E_n=E_n\).
\end{lemma}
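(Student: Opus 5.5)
Since \(\widetilde{\rho}_n(h)\) is a permutation of the finite set \(V_n\), it suffices to prove the inclusion \(\widetilde{\rho}_n(h)E_n\subseteq E_n\); equality then follows by cardinality. In fact I will show the stronger pointwise statement that \(\widetilde{\rho}_n(h)\) preserves the defining condition of \(E_n\) at every point. Fix \(h=(u,B)\in H\) and a point \((i,\ell,x,y)\in V_n\). By \eqref{eq:rho} and \eqref{eq:product-permutations}, its image is
\[
 \bigl(i,\ \ell\overline B^{-1},\ x+2^{n-i}\ell\overline B^{-1}\overline u,\ p_n(h)y\bigr).
\]
The layer index \(i\) is unchanged, so the condition \(i<n\) is preserved in both directions. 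The coordinates \(\ell\) and \(y\) do not enter the definition \eqref{eq:measurable-subset}. It remains to check that the digit \(b_{n-i}\) of the \(x\)-coordinate is unchanged for \(i<n\).

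Put \(m=n-i\), so \(1\leq m\leq n\), and let \(c=\ell\overline B^{-1}\overline u\in R_n\). The new \(x\)-coordinate is \(x+2^{m}c=x+2^{m-1}(2c)\). Applying \eqref{eq:digit-flip} with \(2c\) in place of \(c\) gives
\[
 b_m(x+2^m c)\equiv b_m(x)+(2c\bmod 2)\equiv b_m(x)\pmod 2,
\]
because \(2c\) is even. One point needs care here: \(2c\bmod 2\) must be well defined for \(c\in R_n=\Z/2^n\Z\). This holds since \(n\geq1\), so reduction modulo \(2\) factors through \(R_n\).

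Alternatively, one can argue directly: adding \(2^m c\) changes \(x\) by a multiple of \(2^m\) modulo \(2^n\), and \(m\leq n\), so \(x\bmod 2^m\) is unchanged. Hence \(b_m\), which depends only on \(x\bmod 2^m\), is unchanged. I would present this direct version, since it avoids any ambiguity in \eqref{eq:digit-flip}.

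Thus membership in \(E_n\) is preserved by \(\widetilde{\rho}_n(h)\) at every point, which gives \(\widetilde{\rho}_n(h)E_n\subseteq E_n\), and equality follows. The only real step is matching the translation scale \(2^{n-i}\) with the digit position \(n-i-1\). The translation acts exactly one binary position above the digit that \(E_n\) reads, which is why the subset is exactly \(H\)-invariant. The same bookkeeping shows that \(w\), which acts by \(2^{n-i-1}\) on layer \(i\), flips that digit.
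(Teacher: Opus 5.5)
Your proof is correct and matches the paper's argument: the layer index is preserved, membership ignores \(\ell\) and \(y\), and on layer \(i<n\) the \(x\)-coordinate moves by a multiple of \(2^{n-i}\), so \(x\bmod 2^{n-i}\) and hence \(b_{n-i}(x)\) are unchanged, while the last layer stays outside \(E_n\). Your closing aside is slightly loose, though not part of the proof: \(w\) flips that digit exactly when \(\ell_1\) is odd, not at every point.
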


\begin{proof}
For \(h=(u,B)\in H\), the permutation \(\widetilde{\rho}_n(h)\)
changes \(x\) on layer \(i<n\) by a multiple of \(2^{n-i}\).
Its residue modulo \(2^{n-i}\), and hence the value of \(b_{n-i}\),
is unchanged.
Membership in \(E_n\) does not depend on \(\ell\) or \(y\),
and \(\widetilde{\rho}_n(h)\) preserves the layer index.
Since the last layer is disjoint from \(E_n\),
this proves \(\widetilde{\rho}_n(h)E_n=E_n\).
\end{proof}

\subsection{Action of the half-translation}

Recall from Lemma~\ref{lem:group-structure} that
\(a=(e_1,I_3)\) lies in \(H\) and that \(w=t^{-1}at\)
corresponds to translation by \(\frac12e_1\).
Set \(W_n=\widetilde{T}_n^{-1}
\widetilde{\rho}_n(a)\widetilde{T}_n\).
Direct evaluation gives
\begin{equation}\label{eq:W}
 W_n(i,\ell,x,y)=
 \begin{cases}
  (i,\ell,x+2^{n-i-1}\ell_1,p_n(w)y),&i<n,\\
  (i,\ell,x,p_n(w)y),&i=n.
 \end{cases}
\end{equation}
Indeed, \(\widetilde{T}_n\) sends layer \(i<n\) to layer \(i+1\),
where \(\widetilde{\rho}_n(a)\) adds \(2^{n-(i+1)}\ell_1\) to \(x\).
It sends layer \(n\) to layer \(0\), where the translation term
vanishes, since \(2^n=0\) in \(R_n\).
On \(Q_n\), the composition acts by left multiplication with \(p_n(w)\).

\begin{proposition}\label{prop:count}
For every \(n\geq1\), we have
\(\mu_n(W_nE_n\triangle E_n)=\frac{n}{2(n+1)}\).
\end{proposition}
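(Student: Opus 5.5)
The plan is to compute \(\mu_n(W_nE_n\triangle E_n)\) one layer at a time, using the explicit formula \eqref{eq:W}. Two facts set this up. First, \(W_n\) preserves the layer index \(i\). Second, on every layer it fixes \(\ell\). It moves the \(y\)-coordinate, but membership in \(E_n\) does not depend on \(y\). Since \(W_n\) is a bijection, a point \(v\) lies in \(W_nE_n\triangle E_n\) exactly when \(W_n^{-1}v\) and \(v\) differ in \(E_n\)-membership. Counting instead the points \(v\) with \(\mathbf 1_{E_n}(W_nv)\neq\mathbf 1_{E_n}(v)\) gives the same number, because \(W_n\) permutes \(V_n\). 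So I will count the points \((i,\ell,x,y)\) whose membership changes under \(W_n\).

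\emph{Last layer \(i=n\).} Neither the point nor its image lies in \(E_n\), so this layer contributes nothing. This already explains the factor \(n/(n+1)\): there are \(n\) contributing layers out of \(n+1\) equal layers.

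\emph{Layers \(i<n\).} Put \(m=n-i\), so that \(1\le m\le n\). The \(x\)-coordinate changes to \(x+2^{m-1}\ell_1\). By \eqref{eq:digit-flip} with \(c=\ell_1\),
\[
 b_m\bigl(x+2^{m-1}\ell_1\bigr)\equiv b_m(x)+(\ell_1\bmod 2)\pmod 2 .
\]
Hence membership in \(E_n\) changes exactly when \(\ell_1\) is odd, regardless of \(x\) and \(y\). The first coordinate \(\ell_1\) is uniform on \(R_n=\Z/2^n\Z\), and \(n\ge1\), so exactly half of its values are odd. Each layer \(i<n\) therefore contributes a proportion \(1/2\) of its points.

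\emph{Total.} Each of the \(n\) layers \(i<n\) carries measure \(1/(n+1)\), and half of each changes membership. This gives
\[
 \mu_n(W_nE_n\triangle E_n)=n\cdot\frac1{n+1}\cdot\frac12=\frac{n}{2(n+1)} .
\]
The only step that needs care is applying \eqref{eq:digit-flip} with the correct digit index \(m=n-i\), matching the shift \(2^{n-i-1}=2^{m-1}\). The reduction from the symmetric difference to counting changed points is a bookkeeping step and should be stated explicitly.
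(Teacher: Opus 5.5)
Your proposal is correct and matches the paper's proof: a layerwise count in which the last layer contributes nothing, and on each layer \(i<n\) the identity \eqref{eq:digit-flip} with \(m=n-i\) shows that membership changes exactly when \(\ell_1\) is odd. Your explicit bijection argument, reducing the symmetric difference to counting the points whose membership changes, is the ``measure preservation'' step the paper invokes.
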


\begin{proof}
On layer \(i<n\), put \(m=n-i\).
By \eqref{eq:digit-flip} we have
\[
 b_m(x+2^{m-1}\ell_1)
 \equiv b_m(x)+(\ell_1\bmod2)\pmod2.
\]
Thus membership in \(E_n\) changes exactly when
\(\ell_1\) is odd, independently of \(x\) and \(y\).
Exactly half of the vectors \(\ell\) have this property.
The last layer is preserved by \(W_n\) and is disjoint from \(E_n\),
so it contributes nothing.
By measure preservation,
\begin{align*}
 \mu_n(W_nE_n\triangle E_n)
 &=\mu_n\bigl(\{(i,\ell,x,y)\in V_n: i<n,\ \ell_1\equiv1\pmod2\}\bigr)=\frac{n}{2(n+1)}.\qedhere
\end{align*}
\end{proof}

For a concrete illustration, take \(n=3\), \(i=1\),
and \(\ell=(1,0,0)\), with arbitrary \(y\in Q_3\).
Membership in \(E_3\) is determined by the binary digit
of weight \(2\) in \(x\), with arithmetic modulo \(8\):
\[
 \begin{array}{c|cccccccc}
  x&0&1&2&3&4&5&6&7\\ \hline
  \mathbf1_{E_3}&0&0&1&1&0&0&1&1
 \end{array}
\]
On these points, \(a\) adds \(4\) to \(x\) and preserves membership
in \(E_3\), whereas \(w\) adds \(2\) and flips membership.
The action on \(Q_3\) does not affect membership.
The fraction \(1/2\) comes from counting the vectors \(\ell\)
with odd \(\ell_1\).

\section{Non-stability and consequences}\label{sec:conclusion}

\begin{proof}[Proof of Theorem~\ref{thm:main}]
As observed in Section~\ref{sec:group}, both \(G\) and \(H\) are finitely presented
and residually finite; moreover, \(H\) has property~\(\mathrm{(T)}\).
By Lemma~\ref{lem:group-structure}, \(H\) is infranormal in \(G\), and \(w\) belongs to the normal closure \(N=\langle\!\langle H \rangle\!\rangle\).

By Proposition~\ref{prop:sofic}, the sequence \((\sigma_n)\)
is a sofic approximation of \(G\).
Since \(\sigma_n(s)=\widetilde{\rho}_n(s)\) for every \(s\in S_H\),
Lemma~\ref{lem:subset-invariant} gives
\(
 \mu_n\bigl(\sigma_n(s)E_n\triangle E_n\bigr)=0
\)
for every \(s\in S_H\).
Choose \(v_a\in F(S_H)\) representing \(a\), and represent
\(w\) by \(t^{-1}v_at\). Then
\(\sigma_n(w)=W_n=\widetilde{T}_n^{-1}
\widetilde{\rho}_n(a)\widetilde{T}_n\).
Thus Proposition~\ref{prop:count} gives
\[
 \mu_n\bigl(\sigma_n(w)E_n\triangle E_n\bigr)
 =\frac{n}{2(n+1)}
 \longrightarrow\frac12>0.
\]

Corollary~\ref{cor:criterion} therefore applies with \(c=1/2\).
For every nonprincipal ultrafilter \(\omega\) on \(\mathbb N\),
the induced limit action is not weakly contained in finite actions.
In particular, \(G\) is not stable in finite actions.
\end{proof}

\begin{remark}\label{rem:fixed-algebras-closure}
Let \(\alpha_\omega\) be the limit action on
\((\cB_\omega,\mu_\omega)\) associated with a nonprincipal
ultrafilter \(\omega\) on \(\mathbb N\), and put \(E=[E_n]\).
By Lemma~\ref{lem:subset-invariant} and
Proposition~\ref{prop:count}, the measurable subset \(E\) is
\(H\)-invariant and satisfies
\(
 \mu_\omega(wE\triangle E)=1/2.
\)
Since \(H\leq t^{-1}Ht\leq N\) and \(w\in t^{-1}Ht\),
we obtain
\begin{equation}\label{eq:strict-algebras}
 \cB_\omega^N
 \subseteq \cB_\omega^{t^{-1}Ht}
 =\alpha_\omega(t^{-1})(\cB_\omega^H)
 \subsetneq\cB_\omega^H.
\end{equation}
Here the equality follows by conjugating \(H\), and the
final inclusion is strict because \(E\) is fixed by \(H\)
but not by \(w\).
Thus \(\cB_\omega^H\) is not preserved by \(G\).

For every measured \(G\)-Boolean algebra weakly contained
in finite actions, Corollary~\ref{cor:weak-inequality}
gives \(\cB^H=\cB^N\).
Since \(N\) is normal, \(G\) preserves this algebra,
and its action on it factors through \(G/N\).
Since \(G\) surjects onto \(\Z\), it does not have
property~\((T)\). Thus Kun and Thom's Theorem~C
\cite{KT} does not apply to this example.

Finally, the closure of \(H\) in the profinite topology
of \(G\) is exactly \(N\).
By Lemma~\ref{lem:finite-quotient}, \(H\) and \(N\) have
the same image in every finite quotient of \(G\), so
\(N\subseteq\overline H^{\,\mathrm{pf}}\).
Conversely, if \(g\notin N\), then
Lemma~\ref{lem:group-structure} gives \(\chi(g)=k\neq0\).
For any integer \(m>|k|\), the finite quotient
\(\chi\bmod m:G\to\Z/m\Z\) separates \(g\) from \(H\).
Thus \(\overline H^{\,\mathrm{pf}}=N\).
Since \(H\subsetneq N\), the subgroup \(H\) is not
separable, although \(G\) is residually finite.
\end{remark}

\section*{Acknowledgements}
The author thanks Vadim Alekseev and Andreas Thom for valuable discussions
that helped simplify the proof and improve the exposition.

\begingroup
\raggedright
\bibliographystyle{amsalpha}
\bibliography{references}
\endgroup

\end{document}